\newtheorem{theorem}{Theorem}
\newtheorem{proposition}{Proposition}
\newtheorem{corollary}{Corollary}
\newtheorem{remark}{Remark}
\newtheorem{definition}{Definition}
\theoremstyle{remark}
\title{Polynomial sequences on quadratic curves}
\author{Marco Abrate, Stefano Barbero,\\ Umberto Cerruti, Nadir Murru\\
Department of Mathematics, Turin University,\\ Via Carlo Alberto 10, 10122, Italy\\
marco.abrate@unito.it, stefano.barbero@unito.it,\\ umberto.cerruti@unito.it, nadir.murru@unito.it}
\date{}
\begin{document}

\maketitle

\begin{abstract}
In this paper we generalize the study of Matiyasevich on integer points over conics, introducing the more general concept of \emph{radical points}. With this generalization we are able to solve in positive integers some Diophantine equations, relating these solutions  by means of particular linear recurrence sequences. We point out interesting relationships between these sequences and known sequences in OEIS. We finally show connections between these sequences and Chebyshev and Morgan-Voyce polynomials, finding new identities.
\end{abstract}

\section{Introduction}
\indent \indent  Finding sequences of points that lie over conics is an interesting and well-studied topic in mathematics. An important example is the search for approximations of irrational numbers by sequences of rationals, which can be viewed as sequences of points over conics; see, e.g., \cite{Burger} and \cite{bcm2}.\\ 
\indent Many such investigations involve quadratic curves having points whose coordinates are terms of linear recurrence sequences. Matiyasevich \cite{Mat} showed that points $(x,y)$ belonging to the conic
\begin{equation}\label{Matcon}C(k)=\lbrace (x, y)\in \mathbb{R}^2 : x^2-kxy+y^2=1, k\in\mathbb{N},  k\geq 2\rbrace\end{equation}
are integer points if and only if $(x,y)=(s_{n},s_{n+1})$, where $(s_n)_{n=0}^{+\infty}$ is the linear recurrence sequence with characteristic polynomial $t^2-kt+1$, starting with $s_0=0$ and $ s_1=1$. Similar results have been proved by W. L. McDaniel \cite{Daniel} for a class of conics related to Lucas sequences. Melham \cite{Mela} and Kilic et al. \cite{Kilic} studied conics whose integer points are precisely the points of coordinates $(s_n,s_{n+m})$, $(s_{kn},s_{k(n+m)})$ and provided similar results for different linear recurrence sequences of order 2. Horadam \cite{Horadam}  showed that all consecutive terms of the linear recurrence sequence $(w_n)_{n=0}^\infty$, with characteristic polynomial $t^2-pt+q$ and initial conditions $w_0=a, w_1=b$, where $p$, $q$, $a$, $b \in \mathbb{Z}$, satisfy
$$qw_{n}^2+w_{n+1}^2-pw_nw_{n+1}+eq^n=0$$
where $e=pab-qa^2-b^2$.\\ 
\indent Clearly, these studies are strictly related to the solutions of some Diophantine equations of the second degree with two variables. For example, in \cite{Mills} and \cite{Iran}, linear recurrence sequences have been used in order to solve Diophantine equations $x^2-\alpha xy+y^2+ax+ay +1=0$ and $x^2\pm kxy-y^2\pm x=0$, with $\alpha,a,k \in\mathbb{Z}$, or equivalently to determine all the integer points over the conics described by these equations. In \cite{bcm}, authors used linear recurrence sequences in order to evaluate powers of points over the Pell hyperbola and determine the solutions of the Pell equation in an original way.

Some other interesting results, concerning the characterization of integer points over conics with integer coefficients by means of linear recurrence sequences, can be found, e.g., in \cite{Jones}, \cite{Kimb},\cite{Hon}, and \cite{Jones2}. In this paper, we will consider the curve of equation 
\begin{equation}\label{Matgen}x^2-\sqrt{w}xy+y^2=1, \quad w \in \mathbb{N}, \quad w\geq4, \end{equation} 
which generalizes the Matiyasevich one. We extend the characterization of integer points over this conic, when $w$ is not a perfect square, introducing ''algebraic'' points of the kind $(u\sqrt{w},v)$ or $(u,v\sqrt{w})$ for  $u,v \in \mathbb{N}$. We show that these points are related to particular linear recurrence sequences. The characterization is harder than the integer case, as we will point out in section \ref{sec:mat}. Moreover, in section \ref{sec:dioph}, we will solve some Diophantine equations connected to the quadratic curve \eqref{Matgen} providing new results about the integer sequences involved. Finally, in section \ref{sec:pol}, we will highlight connections with Chebyshev and Morgan-Voyce polynomials, finding new identities.

\section{Linear recurrence sequences on generalized Matiyasevich curves} \label{sec:mat}
In this section we study the quadratic curve
\begin{equation}\label{cw}C(w) = \{(x,y)\in\mathbb R : x^2-\sqrt{w}xy+y^2=1\}, \quad w\in \mathbb{N},\quad w\geq 4 \end{equation}
 that we call \emph{generalized Matiyasevich conic}. For a given value of $w$, we define the set $E(w)=L(w) \cup R(w)$ of \emph{radical points}, where
$$L(w)=\{(u\sqrt{w},v)\in C(w): u,v\in\mathbb N, u\sqrt{w}>v\}$$ 
and 
$$R(w)=\{(u,v\sqrt{w})\in C(w): u,v\in\mathbb N, u>v\sqrt{w}\}.$$
In the following we will characterize all radical points of the generalized Matiyasevich conics by means of consecutive terms of the linear recurrence sequence with characteristic polynomial $t^2-\sqrt{w}t+1$ and initial conditions $0,1$.
\begin{definition} \label{abc}
Let $w$ be a real number with $w\geq4$. We consider 
\begin{align*}
(a_n(w))_{n=0}^{+\infty}& =\left( 0,1,\sqrt{w},w-1,(w-2) \sqrt{w},w^2-3 w+1, \dots\right)\\
(b_n(w))_{n=0}^{+\infty}&= \left( 0,1,w-2,w^2-4 w+3,w^3-6 w^2+10 w-4, \dots \right)\\ 
(c_n(w))_{n=0}^{+\infty}&= \left( 1, w-1, w^2-3w+1, w^3-5w^2+6w-1, \dots \right) \end{align*}
linear recurrence sequences that have characteristic polynomials and initial conditions respectively given by
\begin{align*}
f(t) &= t^2-\sqrt{w}t+1, & a_0(w)=0,&\quad a_1(w)=1,\\
g(t) &= t^2-(w-2)t+1, &b_0(w)=0,& \quad b_1(w)=1,\\
g(t)&=t^2-(w-2)t+1, &c_0(w)=1,&\quad c_1(w)=w-1.
\end{align*}
\end{definition}
In the following we will omit the dependence from $w$ when there is no possibility of misunderstanding. 
\begin{proposition}\label{incr}
The sequence $(a_n)_{n=0}^{+\infty}$ is strictly increasing.
\end{proposition}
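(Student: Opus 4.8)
The plan is to argue by induction on $n$, exploiting the explicit recurrence $a_{n+2}=\sqrt{w}\,a_{n+1}-a_n$ together with the hypothesis $w\geq 4$, which gives $\sqrt{w}\geq 2$. A bare statement ``$a_{n+1}>a_n$'' is not by itself strong enough to feed the induction, since the recurrence subtracts $a_n$ and I therefore also need to control the sign of the terms. Accordingly I would strengthen the inductive hypothesis to the conjunction: for every $n\geq 1$ one has $a_n>0$ and $a_{n+1}>a_n$.

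First I would verify the base cases directly from Definition~\ref{abc}: $a_0=0$, $a_1=1$, and $a_2=\sqrt{w}\geq 2$, so that $a_1>a_0$ and $a_2>a_1>0$, establishing the strengthened hypothesis at $n=1$.

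For the inductive step, assume $a_n>0$ and $a_{n+1}>a_n$. Using the recurrence I would write
\[
a_{n+2}-a_{n+1}=\sqrt{w}\,a_{n+1}-a_n-a_{n+1}=(\sqrt{w}-1)\,a_{n+1}-a_n.
\]
Because $\sqrt{w}-1\geq 1$ and $a_{n+1}>0$, the first summand satisfies $(\sqrt{w}-1)\,a_{n+1}\geq a_{n+1}>a_n$, whence $a_{n+2}-a_{n+1}>0$. This simultaneously yields $a_{n+2}>a_{n+1}>0$, closing the induction and proving $a_{n+1}>a_n$ for all $n\geq 0$ (the case $n=0$ being the base computation $a_1=1>0=a_0$).

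The only real subtlety, and the reason I would carry positivity through the induction rather than treat monotonicity in isolation, is that the recurrence has a negative coefficient on $a_n$; the inequality $\sqrt{w}-1\geq 1$ coming from $w\geq 4$ is precisely what dominates that subtraction. I would remark that one could alternatively use the Binet-type closed form with roots $\frac{\sqrt{w}\pm\sqrt{w-4}}{2}$, but this forces a separate treatment of the degenerate case $w=4$ (equal roots, $a_n=n$), so the induction above is both shorter and uniform in $w$.
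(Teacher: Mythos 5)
Your proof is correct, and it follows the same basic strategy as the paper: induction on $n$ using the recurrence $a_{n+2}=\sqrt{w}\,a_{n+1}-a_n$ and the estimate $\sqrt{w}-1\geq 1$. There is, however, one genuine difference in how the key inequality is organized, and it buys you something. The paper bounds $(\sqrt{w}-1)a_n - a_{n-1} > a_n - a_{n-1} > 0$, which requires the \emph{strict} inequality $\sqrt{w}-1>1$; this fails at $w=4$, so the paper must dispose of that case separately by observing $(a_n(4))=(n)$. You instead place the strictness on the inductive hypothesis, via $(\sqrt{w}-1)a_{n+1}\geq a_{n+1}>a_n$, so the weak bound $\sqrt{w}-1\geq 1$ suffices and the argument is uniform over all $w\geq 4$ --- exactly as you note at the end. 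You also carry positivity explicitly in the strengthened hypothesis, where the paper gets it implicitly from its strong-induction formulation ($a_n>a_{n-1}>\cdots>a_0=0$); either mechanism is sound, but making it explicit is cleaner, since the recurrence does subtract a previous term and positivity is genuinely needed. In short: same method, slightly better bookkeeping, one less case.
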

\begin{proof}
If $w=4$, it is straightforward to observe that $(a_n)_{n=0}^{+\infty}=(n)_{n=0}^{+\infty}$. Now, let us consider $w>4$ and consequently $\sqrt{w}-1>1$. We prove the thesis by induction. For the first terms we have $a_2=\sqrt{w}>a_1=1>a_0=0$. Given any $k\leq n$, we suppose $a_k>a_{k-1}$
and using the recurrence relation $a_{n+1}=\sqrt{w}a_n-a_{n-1}$, we obtain
$$a_{n+1}-a_n=(\sqrt{w}-1)a_n-a_{n-1}>a_n-a_{n-1}>0$$
and the proof is complete.
\end{proof}
Now  let us introduce the matrix $M$ 
$$M = \begin{pmatrix} 0 & 1 \cr -1 & \sqrt{w} \end{pmatrix},$$
with characteristic polynomial $t^2-\sqrt{w}t+1$. The entries of $M^n$  recur with this polynomial (see \cite{CV}) and checking $M$ and $M^2$, we can find that
$$M^n= \begin{pmatrix} -a_{n-1} & a_n \cr -a_n & a_{n+1} \end{pmatrix}.$$
This equality allows us to prove the following proposition.
\begin{proposition}\label{P}
If we consider $w\in\mathbb{N}$ and points $P_n=(a_n,a_{n-1})$, for all integers $n\geq1$, we have
\begin{align*}
1)\quad & P_n\in C(w),\\
2)\quad & P_{2n}=(b_n\sqrt{w},c_{n-1}),\\
3)\quad &  P_{2n+1}=(c_n,b_n\sqrt{w}).
\end{align*}
\end{proposition}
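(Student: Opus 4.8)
The plan is to treat part 1) separately from parts 2) and 3), which share a common mechanism. For part 1) I would exploit the matrix presentation just established. Since $\det M = 0\cdot\sqrt w - 1\cdot(-1) = 1$, multiplicativity of the determinant gives $\det M^n = 1$ for every $n$. Expanding the determinant of the displayed form of $M^n$ yields $\det M^n = -a_{n-1}a_{n+1} + a_n^2$, so that $a_n^2 - a_{n-1}a_{n+1} = 1$. Substituting the recurrence $a_{n+1} = \sqrt w\, a_n - a_{n-1}$ converts the left-hand side into $a_n^2 - \sqrt w\, a_n a_{n-1} + a_{n-1}^2$, which is exactly the defining equation of $C(w)$ evaluated at $P_n=(a_n,a_{n-1})$. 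Hence $P_n\in C(w)$.

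For parts 2) and 3), the first task is to pass from the one-step recurrence for $(a_n)$ to a two-step recurrence matching the characteristic polynomial $g(t)=t^2-(w-2)t+1$ of $(b_n)$ and $(c_n)$. Applying $a_{n+1}=\sqrt w\, a_n - a_{n-1}$ twice, I would derive $a_{m+2}=(w-1)a_m-\sqrt w\,a_{m-1}$ and $a_{m-2}=\sqrt w\,a_{m-1}-a_m$, and adding these gives
$$a_{m+2}=(w-2)a_m-a_{m-2}, \qquad m\ge 2.$$
This shows that both the even-indexed and the odd-indexed subsequences of $(a_n)$ obey the recurrence associated with $g(t)$.

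It then remains to identify these subsequences with $(b_n)$ and $(c_n)$ by matching initial data. Setting $x_n=a_{2n}/\sqrt w$, the two-step recurrence divided by $\sqrt w$ gives $x_{n+1}=(w-2)x_n-x_{n-1}$ for $n\ge1$, while $x_0=a_0/\sqrt w=0$ and $x_1=a_2/\sqrt w=1$ coincide with $b_0,b_1$; hence $x_n=b_n$, i.e. $a_{2n}=b_n\sqrt w$. Likewise $y_n=a_{2n+1}$ satisfies $y_{n+1}=(w-2)y_n-y_{n-1}$ with $y_0=a_1=1=c_0$ and $y_1=a_3=w-1=c_1$, so $y_n=c_n$, i.e. $a_{2n+1}=c_n$. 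Reindexing the even relation as $a_{2n-1}=c_{n-1}$ and assembling coordinates yields $P_{2n}=(a_{2n},a_{2n-1})=(b_n\sqrt w,c_{n-1})$ and $P_{2n+1}=(a_{2n+1},a_{2n})=(c_n,b_n\sqrt w)$, which are the claimed identities.

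The only genuinely delicate point is the derivation of the two-step recurrence and, with it, keeping track of index ranges so that every term invoked (in particular $a_{m-2}$) is well defined; once that recurrence is in hand, everything reduces to comparing initial conditions. I would also note that the relation $a_{2n}=b_n\sqrt w$ simultaneously settles the integrality statement implicit in the definition of the radical points in $L(w)$ and $R(w)$, since it exhibits $a_{2n}$ as exactly $\sqrt w$ times the integer $b_n$, while $a_{2n+1}=c_n$ is an integer.
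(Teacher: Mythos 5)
Your proof is correct and follows essentially the same route as the paper: part 1) via $\det M^n=[\det M]^n=1$ followed by substitution of the recurrence, and parts 2), 3) by showing the even- and odd-indexed subsequences of $(a_n)$ satisfy the recurrence with characteristic polynomial $g(t)=t^2-(w-2)t+1$ and then matching initial conditions against $(b_n)$ and $(c_n)$. The only divergence is in justifying that two-step recurrence: the paper invokes the matrix-theoretic fact (citing its reference \cite{CV}) that the subsequences recur with the characteristic polynomial of $M^2$, whereas you derive $a_{m+2}=(w-2)a_m-a_{m-2}$ by direct elimination, which has the added benefit of making explicit, via $x_n=a_{2n}/\sqrt{w}$, the claim the paper merely asserts as ``immediate,'' namely that the even-indexed terms are integer multiples of $\sqrt{w}$.
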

\begin{proof}
\ \\
\begin{enumerate}
\item Since 
$$a_{n}^2-a_{n-1}a_{n+1}=\det (M^n) =[\det(M)]^n= 1,$$
we obtain $P_n\in C(w)$ because
$$1=a_{n}^2-a_{n-1}(\sqrt{w}a_{n}-a_{n-1})=a_{n}^2-\sqrt{w}a_{n}a_{n-1}+a_{n-1}^2.$$

\item It is immediate to prove that the even terms of $(a_n)_{n=0}^{+\infty}$ are multiple of $\sqrt{w}$. Moreover, even and odd terms of $(a_n)_{n=0}^{+\infty}$ recur with the characteristic polynomial $g(t)=t^2-(w-2)t+1$ of the matrix $M^2$ (see \cite{CV}). Thus the sequences $(a_{2n})_{n=0}^{+\infty}$, $(a_{2n+1})_{n=0}^{+\infty}$, $(b_n)_{n=0}^{+\infty}$, and $(c_n)_{n=0}^{+\infty}$ have characteristic polynomial $g(t)$. Furthermore, observing that 
\begin{align*}
a_0=0,&\quad a_1=1,\quad a_2=\sqrt{w}, \quad a_3=w-1,\\
b_0=0,&\quad b_1=1,\quad c_0=1,  \quad \quad c_1=w-1,
\end{align*}
we get
$$a_{2n}=b_n\sqrt{w},\quad a_{2n+1}=c_{n},\quad n\geq0,$$
and clearly
$$P_{2n}=(a_{2n},a_{2n-1})=(b_n\sqrt{w},c_{n-1}),\quad n\geq1.$$
\item From the previous considerations, we immediately find
$$P_{2n+1}=(a_{2n+1},a_{2n})=(c_n,b_n\sqrt{w}),\quad n\geq0.$$
\end{enumerate}
\end{proof}
As a consequence of Propositions \ref{incr} and \ref{P} we have the following inclusions. 
\begin{corollary}Using the above notation, we have
\begin{align*}
1)\quad & \lbrace P_{2n+1}\in C(w): n\geq0 \rbrace \subseteq R(w),\\
2)\quad & \lbrace P_{2n}\in C(w): n\geq1 \rbrace \subseteq L(w),\\
3)\quad & \lbrace P_{n}\in C(w): n\geq1 \rbrace \subseteq E(w).
\end{align*}
\end{corollary}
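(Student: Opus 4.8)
The plan is to read off each inclusion directly from the explicit coordinate descriptions of $P_{2n}$ and $P_{2n+1}$ furnished by Proposition \ref{P}, combined with the strict monotonicity of $(a_n)$ from Proposition \ref{incr}. Membership of a point in $L(w)$ or $R(w)$ has two ingredients: the point lies on $C(w)$, and its coordinates have the prescribed ``radical'' shape together with the correct strict inequality. The first ingredient is already supplied by part 1) of Proposition \ref{P}, so the work reduces to checking the coordinate shape and the inequality.

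For inclusion 1), I would use $P_{2n+1}=(c_n,b_n\sqrt{w})$ from part 3) of Proposition \ref{P}. First I would note that $b_n$ and $c_n$ are natural numbers: they satisfy an integer linear recurrence with integer initial data, and since $a_{2n+1}=c_n$ and $a_{2n}=b_n\sqrt{w}$ are nonnegative terms of the increasing sequence $(a_n)$, both $c_n$ and $b_n$ are nonnegative. Then, to place $P_{2n+1}$ in $R(w)$, I only need the inequality $c_n>b_n\sqrt{w}$, which is exactly $a_{2n+1}>a_{2n}$, an instance of strict monotonicity from Proposition \ref{incr}.

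Inclusion 2) is entirely symmetric: using $P_{2n}=(b_n\sqrt{w},c_{n-1})$ from part 2) of Proposition \ref{P}, the natural-number condition holds as before, and membership in $L(w)$ requires $b_n\sqrt{w}>c_{n-1}$, i.e.\ $a_{2n}>a_{2n-1}$, again immediate from Proposition \ref{incr}. Inclusion 3) then follows by a parity split: any integer $n\geq1$ is either even, in which case $P_n\in L(w)\subseteq E(w)$ by 2), or odd, in which case $P_n\in R(w)\subseteq E(w)$ by 1); since $E(w)=L(w)\cup R(w)$, this yields $P_n\in E(w)$ for all $n\geq1$.

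The only point demanding any care is the verification that $b_n$ and $c_n$ are genuinely natural numbers (not merely integers), together with the bookkeeping of index ranges ($n\geq0$ for the odd points, $n\geq1$ for the even ones, so that $c_{n-1}$ is well defined); the inequalities themselves are immediate from monotonicity and present no real obstacle.
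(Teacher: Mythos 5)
Your proof is correct and is exactly the argument the paper intends: the paper states this corollary without proof as an immediate consequence of Propositions \ref{incr} and \ref{P}, and your write-up simply makes that deduction explicit (coordinate shapes from Proposition \ref{P}, inequalities from monotonicity, parity split for the union). Your extra care about $b_n,c_n$ being nonnegative integers is a reasonable detail the paper leaves tacit.
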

\begin{corollary}\label{corbc}
The terms of $(b_n)_{n=0}^{+\infty}$ and $(c_n)_{n=0}^{+\infty}$ satisfy 
$$b_{n+1}=c_n-b_n,\quad c_{n+1}= w b_{n+1}-c_n,\quad n\geq0.$$
\end{corollary}
\begin{proof}
Let us observe that the sequences $(b_n)_{n=0}^{+\infty}$ and $(c_n)_{n=0}^{+\infty}$ recur with characteristic polynomial $g(t)$ and
\begin{align*}
& b_1=c_0-b_0=1,  \quad \quad\quad\quad b_2=c_1-b_1=w-2,\\
& c_1=wb_1-c_0=w-1,\quad  c_2=wb_2-c_1=w^2-3w+1.
\end{align*}
Thus, we have 
$$b_{n+1}=c_n-b_n,\quad c_{n+1}= w b_{n+1}-c_n,\quad n\geq0.$$
\end{proof}
\begin{remark}
If $w=k^2$ for some $k\geq2$, then \eqref{Matgen} is the equation of the conic \eqref{Matcon} studied by Matiyasevich. Moreover, in this case the set of the  radical points is 
$$E(w)=E(k^2)=\{(a_{n+1}(k^2),a_n(k^2)): n \in \mathbb{N} \}$$ 
which corresponds to the set of all integer points over this curve.
\end{remark}

In the next theorem we show that this result can be generalized when $w$ is a non-square positive integer.

\begin{theorem}
For every non-square integer $w \geq 4$, the set of radical points belonging to the conic \eqref{Matgen} is 
$$E(w)=\{(a_{n+1}(w),a_n(w))\in C(w): n\in \mathbb{N}\}.$$
\end{theorem}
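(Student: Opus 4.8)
The plan is to establish the set equality by double inclusion. The inclusion $\{(a_{n+1},a_n) : n\in\mathbb N\}\subseteq E(w)$ is already handed to us: part (3) of the corollary following Proposition \ref{P} states exactly that every $P_n=(a_n,a_{n-1})$ with $n\geq 1$ lies in $E(w)$, and re-indexing $P_{n+1}=(a_{n+1},a_n)$ gives the claim. So the real content is the reverse inclusion: every radical point arises as a consecutive pair from the sequence $(a_n)$. Since $E(w)=L(w)\cup R(w)$, I would treat the two types of radical points separately, mirroring parts (2) and (3) of Proposition \ref{P}.

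First I would take an arbitrary radical point and reduce it to a statement purely about the integer sequences $(b_n)$ and $(c_n)$. Suppose $(u\sqrt w,v)\in L(w)$, so $u,v\in\mathbb N$, $u\sqrt w>v$, and $wu^2-\sqrt w\cdot u\sqrt w\cdot v+v^2=1$, i.e. $wu^2-wuv+v^2=1$; similarly a point of $R(w)$ yields $u^2-wuv+wv^2=1$. By Proposition \ref{P} the point $P_{2n}=(b_n\sqrt w,c_{n-1})$ is the generic element of $L(w)$ coming from the sequence, and $P_{2n+1}=(c_n,b_n\sqrt w)$ the generic element of $R(w)$. So it suffices to show that the only solutions $(u,v)\in\mathbb N^2$ of $wu^2-wuv+v^2=1$ with $u\sqrt w>v$ are the pairs $(b_n,c_{n-1})$, and analogously for the $R(w)$ equation. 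The natural engine for this is a descent/Vieta-jumping argument powered by the matrix $M$ (equivalently by Corollary \ref{corbc}): given a solution, apply the inverse step of the recurrence to produce a smaller nonnegative solution, using Proposition \ref{incr} (strict monotonicity of $(a_n)$, hence of $(b_n)$ and $(c_n)$) to guarantee the descent terminates, and check that it bottoms out precisely at the seed values $a_0=0,a_1=1$.

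Concretely, the descent step I expect to use is the following. Given a radical point $(x,y)=(a_m,a_{m-1})$-candidate lying on $C(w)$ with $x>y\geq 0$, form the ''predecessor'' $(y,\,\sqrt w\,y-x)=(a_{m-1},a_{m-2})$ obtained from $a_{m-2}=\sqrt w a_{m-1}-a_m$; one must verify it is again a point of $C(w)$ of the correct radical type, that its entries are again of the form $u\sqrt w,v$ or $u,v\sqrt w$ with nonnegative integer $u,v$, and that it is strictly smaller. Because radical points alternate type (an $L$-point has predecessor an $R$-point and vice versa, since multiplying by $\sqrt w$ swaps the ''rational'' and ''radical'' coordinate), the two equations feed into each other and the joint descent is controlled by the single decreasing quantity $x$. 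Iterating, strict positivity forces the process to reach the minimal radical point, which I would identify with $P_1=(1,0)$ (or $P_2=(\sqrt w,1)$ for the $L$-side), and then running the recurrence forward recovers the original point as some $(a_{n+1},a_n)$.

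The main obstacle will be the descent's base-case bookkeeping and the verification that the predecessor stays in the correct region: one must rule out its entries becoming negative prematurely and confirm that the inequality defining $L(w)$ or $R(w)$ is preserved (or cleanly swapped) at each step, so that the algorithm cannot exit the set of radical points except at the seed. In particular the strict inequalities $u\sqrt w>v$ and $u>v\sqrt w$ must be shown to propagate under the jump; this is where Proposition \ref{incr} and the explicit relations $b_{n+1}=c_n-b_n$, $c_{n+1}=wb_{n+1}-c_n$ of Corollary \ref{corbc} do the heavy lifting, since they pin down the sign and size of each coordinate of the predecessor. Once monotonicity guarantees termination and the relations guarantee membership, the matching with $(b_n,c_n)$ via Proposition \ref{P} completes the reverse inclusion and hence the theorem.
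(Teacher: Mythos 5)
Your proposal is essentially the paper's own proof: the paper formalizes your descent as strong induction on $E(w)$ well-ordered by the first coordinate, and its key step --- replacing $(u\sqrt{w},v)\in L(w)$ by the smaller point $Q=(v,(v-u)\sqrt{w})\in R(w)$ --- is exactly your predecessor map $(x,y)\mapsto(y,\sqrt{w}\,y-x)$, with the type alternation between $L(w)$ and $R(w)$ and with Corollary \ref{corbc} used identically to recover the indices ($v=c_n$, $u=c_n-b_n=b_{n+1}$). The inequality bookkeeping you flag as the main obstacle is precisely what the paper verifies ($v>u$ from $1-v^2<0$, and $v>(v-u)\sqrt{w}$ by dividing the conic equation by $u\sqrt{w}$), so your plan is sound and follows the same route.
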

\begin{proof}
We will write $\{P_n\}$ instead of $\{P_n=(a_n,a_{n-1})\in C(w): n\geq1\}$. We want to show that $E(w)\subseteq\{P_n\}$. We give the following order to the elements of $E(w)$. Given any $(x_1,y_1), (x_2, y_2)\in E(w)$ we have $(x_1,y_1)<(x_2,y_2)$ if and only if $x_1<x_2$. In this way this set is well-ordered, with minimum element $(1,0)$ and we can prove the theorem by induction. Since, $(1,0)=(a_1,a_0)=P_1$ the induction basis is true. Moreover, let us suppose that if $(r,s)\in E(w)$, for $(r,s)<(\bar x,\bar y)$, then $(r,s)\in\{P_n\}$. We will prove that if $(\bar x, \bar y)\in E(w)$ then $(\bar x, \bar y)\in \{P_n\}$. There are two possible cases: $(\bar x, \bar y)\in L(w)$ and $(\bar x, \bar y)\in R(w).$ 

Let us suppose $(\bar x, \bar y)\in L(w)$, i.e., $(\bar x, \bar y)=(u\sqrt{w},v)$ and $u\sqrt{w}>v$. We know that
\begin{equation} \label{44} u^2w+v^2-uvw=1. \end{equation}
If $v=1$, then $u^2=u$ and consequently we have $u=1$, i.e., $(\bar x, \bar y)=(\sqrt{w},1)=(a_2,a_1)=P_2$. If $v>1$, we have $u=\frac{1-v^2}{uw}+v$ and $v>u$ since $1-v^2<0$. From $u\sqrt{w}>v$, it follows that $uv\sqrt{w}>v^2$ and
\begin{equation} \label{46}  \cfrac{v^2}{u\sqrt{w}}<v. \end{equation}
Dividing \eqref{44} by $u\sqrt{w}$ we obtain
$$u\sqrt{w}=v\sqrt{w}+\cfrac{1}{u\sqrt{w}}-\cfrac{v^2}{u\sqrt{w}}$$
and using $\eqref{46}$, we have
$$u\sqrt{w} > v\sqrt{w}+ \frac{1}{u\sqrt{w}}-v$$
and
$$ v>  v\sqrt{w}- u\sqrt{w} + \frac{1}{u\sqrt{w}}>(v-u)\sqrt{w}.$$
Now, we consider the point $Q=(v,(v-u)\sqrt{w})$. We have that $Q\in C(w)$, indeed
$$v^2+w(v-u)^2-wv(v-u)=v^2+wu^2-uvw=1.$$
Since $v>(v-u)\sqrt{w}$ we obtain $Q\in R(w)\subseteq E(w)$. Moreover, we are considering $v<u\sqrt{w}$, i.e., $Q<(\bar x,\bar y)$. By Proposition \ref{P} and the inductive hypothesis we conclude that $Q=(v,(v-u)\sqrt{w})=(c_n,b_n\sqrt{w})$, for some index $n$. Thus, we have 
$$v=c_n,\quad v-u=b_n$$
and by Corollary \ref{corbc} it follows that
$$v=c_n,\quad u=c_n-b_n=b_{n+1}.$$
Finally,
$$(\bar x,\bar y)=(u\sqrt{w},v)=(b_{n+1}\sqrt{w},c_n)=P_{2(n+1)}\in\{P_n\}.$$
Similar arguments hold in the case $(\bar x,\bar y)\in R(w)$.
\end{proof}

\section{Diophantine equations and integer sequences} \label{sec:dioph}
Let us consider the quadratic curve
$$C_2(w) = \{(x,y)\in\mathbb R : (x+y-1)^2=wxy, w\in \mathbb{N},w\geq 4\}.$$
In the next theorem we determine all the positive integer points of $C_2(w)$, i.e., we solve the Diophantine equation
\begin{equation}\label{C2}(x+y-1)^2=wxy, \quad w\in\mathbb{N},\quad w\geq4.\end{equation}

\begin{definition} \label{upol}
Let $w$ be a real number with $w\geq4$. We define $(u_n(w))_{n=0}^{+\infty}$ as the sequence satisfying the recurrence relation
\begin{equation}\label{urec}
\begin{cases}
u_0(w)=0, \quad u_1(w)=1,\cr
u_{n+1}(w)=(w-2)u_{n}(w)-u_{n-1}(w)+2,\quad n\geq2.
\end{cases}
\end{equation}
\end{definition}
In the following we will omit the dependence on $w$, when there is no possibility of misunderstanding.
\begin{theorem}
The point $(x,y)\in C_2(w)$  has positive integer coordinates if and only if $(x,y)=(u_{n+1},u_n)$ or $(x,y)=(u_n, u_{n+1})$, for some natural number $n$.
\end{theorem}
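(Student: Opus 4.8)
The plan is to prove the two implications by quite different means: the ``if'' direction by relating $(u_n)$ to the sequence $(a_n)$ from Section \ref{sec:mat}, and the ``only if'' direction by a Vieta-jumping descent directly on the equation \eqref{C2}.

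For the ``if'' direction, the key observation I would establish first is the identity $u_n=a_n(w)^2$. Writing $v_n=a_n^2$ and using $a_{n+1}=\sqrt{w}\,a_n-a_{n-1}$ gives $v_{n+1}=w a_n^2-2\sqrt{w}\,a_na_{n-1}+a_{n-1}^2$; the troublesome cross term is eliminated by Proposition \ref{P}, part 1), which yields $\sqrt{w}\,a_na_{n-1}=a_n^2+a_{n-1}^2-1$, and substituting produces $v_{n+1}=(w-2)v_n-v_{n-1}+2$ with $v_0=0,\ v_1=1$. This is exactly the recurrence \eqref{urec}, so $u_n=a_n^2$. In particular $u_n$ is a positive integer for $n\geq1$: integrality is immediate from the integer recurrence \eqref{urec}, and positivity from $u_n=a_n^2$ with $a_n>0$ for $n\geq1$ by Proposition \ref{incr}. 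Finally, squaring the defining relation $a_{n+1}^2+a_n^2-1=\sqrt{w}\,a_{n+1}a_n$ of the point $P_{n+1}=(a_{n+1},a_n)\in C(w)$ gives $(u_{n+1}+u_n-1)^2=w\,u_{n+1}u_n$, i.e. $(u_{n+1},u_n)\in C_2(w)$; and since \eqref{C2} is symmetric in $x,y$, the swapped point $(u_n,u_{n+1})$ lies on $C_2(w)$ as well.

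For the ``only if'' direction I would expand \eqref{C2} into the symmetric form $x^2+y^2-(w-2)xy-2x-2y+1=0$ and read it as a quadratic in $x$ for fixed $y$: its two roots $x,x'$ satisfy $x+x'=(w-2)y+2$ and $xx'=(y-1)^2$. Hence from any positive integer solution $(x,y)$ the conjugate $x'=(w-2)y+2-x=(y-1)^2/x$ is again an integer solution, and a positive one as soon as $y>1$. I would first dispose of the diagonal: substituting $x=y$ forces $(w-4)x^2+4x-1=0$, which has no positive integer root for any integer $w\geq4$, so every solution has $x\neq y$ and, after a symmetric swap, we may assume $x>y$. The descent step is the estimate that when $x>y>1$ one has $0<x'<y$ (positivity from $(y-1)^2>0$, and $x'<y$ since $xy>y^2>(y-1)^2$); thus each jump strictly decreases the larger coordinate.

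The conclusion then follows by a minimal-counterexample argument. If some positive integer solution is not of the stated form, take one $(\bar x,\bar y)$ with $\bar x>\bar y$ minimizing the larger coordinate $\bar x$. If $\bar y=1$ the equation gives $\bar x=w$, so $(\bar x,\bar y)=(u_2,u_1)$, a contradiction; hence $\bar y>1$ and the jump produces a strictly smaller positive solution $(\bar x',\bar y)$, which when sorted must be $(u_{k+1},u_k)$ for some $k\geq1$ by minimality. Reversing the jump via $u_{k+2}=(w-2)u_{k+1}+2-u_k$, which is precisely \eqref{urec}, recovers $\bar x=u_{k+2}$, so $(\bar x,\bar y)=(u_{k+2},u_{k+1})$ is on the list after all. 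I expect the main obstacle to be the bookkeeping in this descent: verifying that each jump lands inside the positive integers and strictly reduces size so that the induction is well-founded, correctly handling the base cases $(w,1)=(u_2,u_1)$ and $(1,w)=(u_1,u_2)$, and checking that the reverse jump reproduces exactly the inhomogeneous recurrence \eqref{urec} defining $(u_n)$ rather than some shifted variant.
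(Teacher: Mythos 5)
Your proof is correct, and although both arguments ultimately hinge on the same conjugate-point construction, your execution differs from the paper's in both directions. For the ``if'' part, the paper verifies membership in $C_2(w)$ directly from the recurrence \eqref{urec} together with the auxiliary identity \eqref{aux}, $u_n^2-u_{n-1}u_{n+1}-2u_n+1=0$ (stated as straightforward to prove); only afterwards does it identify $u_n=a_n^2$ via the machinery of products of linear recurrence sequences (Remark \ref{cps} and \eqref{unan}) and prove the $C(w)$--$C_2(w)$ correspondence (Theorem \ref{cc2c3}). You instead prove $u_n=a_n^2$ up front by an elementary computation that eliminates the cross term via Proposition \ref{P}, and then square the conic relation --- effectively re-deriving \eqref{unan} and part 1) of Theorem \ref{cc2c3} without any product-sequence theory. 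For the ``only if'' part, the paper's map $(x,y)\mapsto(y,z)$ with $xz=(y-1)^2$ is exactly your Vieta conjugate $x'=(w-2)y+2-x=(y-1)^2/x$; but the paper obtains the integrality of $z$ through coprimality and congruence arguments and then outsources the identification of the resulting sequence with \eqref{urec} to Mills \cite{Mills}, leaving the termination and size-decrease of the iteration implicit. Your version supplies precisely what the paper delegates to that reference: the inequalities $0<x'<y$ when $x>y>1$, the exclusion of diagonal solutions, the base case $(w,1)=(u_2,u_1)$, and the reverse-jump step recovering \eqref{urec}, so the descent is self-contained and visibly well-founded. One small detail worth making explicit: identifying the sorted jumped solution as $(u_{k+1},u_k)$ rather than $(u_k,u_{k+1})$ uses that $(u_n)_{n=0}^{+\infty}$ is increasing, which follows from $u_n=a_n^2$ and Proposition \ref{incr}. (Both you and the paper gloss over the degenerate index $n=0$, for which $(u_1,u_0)=(1,0)$ lies on the curve but is not a positive point.)
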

\begin{proof}
Since equation \eqref{C2} is symmetric with respect to $x$ and $y$, we may consider only the case $(x,y)=(u_{n+1},u_n)$. It is straightforward to prove that
\begin{equation}\label{aux} u_n^2-u_{n-1}u_{n+1}-2u_n+1=0,\quad n\geq1, \end{equation}
and consequently if $(x,y)=(u_{n+1},u_n)$, using the recurrence relation \eqref{urec} and the equality \eqref{aux}, we have
\begin{align*}
(x+y-1)^2-w x y&=u_n^2+u_{n+1}^2-wu_{n+1}u_n+2u_{n+1}u_n-2u_{n+1}-2u_n+1&\\
&=u_{n}^2+u_{n+1}(u_{n+1}-(w-2)u_n-2)-2u_n+1&\\
&=u_n^2-u_{n-1}u_{n+1}-2u_n+1=0,\end{align*}
i.e., $(x,y)\in C_2(w)$ with positive integer coordinates.

Conversely, let $(x,y)\in C_2(w)$ be a point with positive integer coordinates. 
First of all we observe that $x$ and $y$ must be coprime in order to satisfy the equation defining $C_2(w)$.  Furthermore it follows that
$$x|y^2-2y+1,\quad y|x^2-2x+1,$$
and
$$xz=y^2-2y+1,$$
where $z$ is an integer number. Since $xz \equiv 1 \pmod y$, we have
$$x^2(z^2-2z+1)\equiv 1-2x+x^2\equiv 0 \pmod y$$
and
$$y|z^2-2z+1,\quad z|y^2-2y+1,$$
i.e, $(y,z)\in C_2(w)$ is an integer point. Thus, starting with $x$ and $y$ we can determine a sequence where three consecutive elements satisfy
$$y^2-xz-2y+1=0.$$
This equation corresponds to the relation \eqref{aux}. Thus, it follows that $x,y,z$ are three consecutive elements of $(u_n)_{n=0}^{+\infty}$, i.e., $(x,y,z)=(u_{n+1}, u_n, u_{n-1})$ for a given index $n$. Using the results proved in \cite{Mills}, it is easy to show that this sequence must satisfy the recurrence relation \eqref{urec}.
\end{proof}
In the following theorem we highlight the relationships among the quadratic curves $C(w)$, $C_2(w)$ and $C_3(w)$ defined as
$$ C_3(w)=\{(x,y)\in\mathbb R: (x+y)^2=w(x+1)(y+1), w\in \mathbb{N},w\geq4\}.$$
In this way, we obtain the positive integer solutions of the Diophantine equation
\begin{equation}\label{C3}(x+y)^2=w(x+1)(y+1),\quad w\in \mathbb{N},\quad w\geq4. \end{equation}
\begin{theorem}\label{cc2c3}
Considering positive real numbers, we have
\begin{align*}
1)\quad & (x,y)\in C(w) \quad\text{if and only if}\quad (x^2,y^2)\in C_2(w),\\
2)\quad& (x,y)\in C(w) \quad\text{if and only if}\quad (2x^2-1,2y^2-1)\in C_3(w),\\
3)\quad & (x,y)\in C_2(w) \quad\text{if and only if}\quad (2x-1,2y-1)\in C_3(w).
\end{align*}
\end{theorem}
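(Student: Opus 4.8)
The plan is to prove each of the three biconditionals by direct algebraic substitution, since each one relates membership in one curve to membership in another via an explicit coordinate transformation. Let me sketch the approach.

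The plan is to establish each of the three equivalences by a direct substitution into the defining equation of the target curve, taking advantage of the fact that all three transformations are given by explicit formulas. I would start with statement 3), which is the cleanest because the map $(x,y)\mapsto(2x-1,2y-1)$ is an invertible affine change of coordinates. Writing $X=2x-1$ and $Y=2y-1$ and substituting into the equation of $C_3(w)$, one computes $(X+Y)^2=(2x+2y-2)^2=4(x+y-1)^2$ and $w(X+1)(Y+1)=w\cdot 2x\cdot 2y=4wxy$, so the relation defining $C_3(w)$ becomes exactly $(x+y-1)^2=wxy$, i.e.\ the equation of $C_2(w)$. Since the substitution is a bijection of $\mathbb R^2$, the equivalence follows in both directions with no further argument.

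Next I would treat statement 1). Setting $X=x^2$ and $Y=y^2$, the equation of $C_2(w)$ reads $(x^2+y^2-1)^2=wx^2y^2$. For the forward implication, if $(x,y)\in C(w)$ then $x^2+y^2-1=\sqrt w\,xy$, and squaring both sides gives precisely $(x^2,y^2)\in C_2(w)$; this step is unconditional and uses only the elementary identity. Statement 2) then need not be proved from scratch: applying the equivalence of 3) to the pair $(x^2,y^2)$ shows that $(x^2,y^2)\in C_2(w)$ if and only if $(2x^2-1,2y^2-1)\in C_3(w)$, and composing this with 1) yields $(x,y)\in C(w)$ if and only if $(2x^2-1,2y^2-1)\in C_3(w)$. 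In this way the entire theorem reduces to statement 1) together with the clean affine computation for 3).

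The main obstacle lies in the converse implication of 1) (and hence of 2)), where one must pass from $(x^2+y^2-1)^2=wx^2y^2$ back to $x^2+y^2-1=\sqrt w\,xy$ by extracting a square root. A priori only $x^2+y^2-1=\pm\sqrt w\,xy$ is available, and the minus sign corresponds to the companion relation $x^2+\sqrt w\,xy+y^2=1$ rather than to $C(w)$. The crucial step is therefore to fix the correct branch, i.e.\ to ensure $x^2+y^2-1\ge0$ so that the positive root is selected and the point genuinely lands on $C(w)$. I would address this by a positivity argument on the distinguished branch of positive real points, using $w\ge4$ (hence $\sqrt w\ge2$); this sign selection is the only delicate point, and once it is settled the remaining verifications are the routine substitutions sketched above.
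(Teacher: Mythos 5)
Your part 3) and the forward directions of 1) and 2) are correct and essentially identical to the paper's substitutions, and your derivation of 2) by composing 1) with 3) applied to $(x^2,y^2)$ is actually a cleaner decomposition than the paper's direct computation. However, the step you yourself single out as crucial --- selecting the positive branch in the converse of 1) --- is left as a promise, and no such positivity argument exists under the stated hypotheses. Counterexample: take $w=4$ and $(x,y)=(\tfrac12,\tfrac12)$. Then $(x^2,y^2)=(\tfrac14,\tfrac14)$ satisfies $(\tfrac14+\tfrac14-1)^2=\tfrac14=4\cdot\tfrac14\cdot\tfrac14$, so $(x^2,y^2)\in C_2(4)$, yet $x^2-2xy+y^2=0\neq1$, so $(x,y)\notin C(4)$. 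More generally, for every $w\geq4$ the companion conic $x^2+\sqrt{w}\,xy+y^2=1$ (your ``minus branch'') is an ellipse through $(1,0)$ and $(0,1)$ containing an arc in the open first quadrant, and every point of that arc is a positive-real counterexample to the converse of 1); the inequality $\sqrt{w}\geq2$ only yields $(x+y)^2\leq1$ on that branch, which is no contradiction. The branch can be fixed only under a stronger hypothesis such as $x^2+y^2\geq1$ (e.g.\ $x,y\geq1$), which holds for the integer and radical points the paper actually works with, but not for general positive reals.

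For comparison, the paper's own proof has exactly the same defect, only less visibly: it verifies the forward implications by substitution, dismisses each converse as ``obvious,'' and in 2) ``takes the square root of both members'' without addressing the sign. So the printed theorem is, strictly speaking, false in the converse directions of 1) and 2) over the positive reals; the paper later uses only the forward direction of 1) (in the remark identifying $(\alpha,\beta)$ with $(u_{n+1},u_n)$), so nothing downstream breaks. Your write-up is more careful than the paper in diagnosing where the difficulty sits, but as a proof of the stated equivalences it has a genuine gap: the step you call ``the only delicate point'' cannot be completed as you describe, and repairing it requires amending the statement rather than finding a cleverer positivity argument.
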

\begin{proof}
\ \\
\begin{enumerate}
\item If $(x,y)\in C(w)$, then $ x^2+y^2-1=\sqrt{w}xy,$ and squaring both members we have $ (x^2+y^2-1)^2= wx^2y^2 $, i.e., $(x^2,y^2) \in C_2(w)$. The converse is obvious.

\item If $(2x^2-1,2y^2-1)\in C_3(w)$, then substituting in \eqref{C3} a little calculation shows that $(x^2+y^2-1)^2=wx^2y^2$. Taking the square root of both members we find $x^2+y^2-1=\sqrt{w}xy$, i.e., $(x,y)\in C(w)$. The converse is obvious.

\item If $(2x-1,2y-1)\in C_3(w)$, then substituting in \eqref{C3} we obtain $(x+y-1)^2=wxy,$, i.e., $(x,y)\in C_2(w).$ The converse is obvious.
\end{enumerate}
\end{proof}

Now, we need some results about operations between linear recurrence sequences. Specifically, we use the product between linear recurrence sequences; see, e.g., \cite{Zierler}, \cite{CV2}, and \cite{Nied}. We mention the main theorem that we will use.
\begin{theorem}
Let $(p_n)_{n=0}^{+\infty}$ and $(q_n)_{n=0}^{+\infty}$ be linear recurrence sequences with characteristic polynomials $f(t)$ and $g(t)$, whose companion matrices are $A$ and $B$, respectively. The product sequence $(p_nq_n)_{n=0}^{+\infty}$, is a linear recurrence sequence with the same characteristic polynomial of the matrix $A \odot B$, where $\odot$ is the Kronecker product \cite{CV2}.
\end{theorem}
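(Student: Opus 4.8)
The plan is to represent each linear recurrence sequence as a bilinear form in the powers of its companion matrix, and then to push the product through the Kronecker product by exploiting its multiplicative behaviour.

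First I would recall the state-vector description attached to a companion matrix. If $A$ is the $r\times r$ companion matrix of $f(t)$ (so $r=\deg f$), then $A$ sends the vector $(p_n,p_{n+1},\dots,p_{n+r-1})^T$ to $(p_{n+1},\dots,p_{n+r})^T$; iterating from $n=0$ and reading off the first coordinate yields a representation
$$p_n=\mathbf{u}^T A^n\mathbf{v},\qquad n\ge 0,$$
where $\mathbf{u}=(1,0,\dots,0)^T$ and $\mathbf{v}=(p_0,\dots,p_{r-1})^T$. In exactly the same way, writing $B$ for the $s\times s$ companion matrix of $g(t)$, one has $q_n=\mathbf{r}^T B^n\mathbf{s}$ for suitable vectors $\mathbf{r}$ and $\mathbf{s}$.

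Next I would multiply the two scalar expressions. Each factor is a $1\times1$ matrix, so its value coincides with its Kronecker product, and the mixed-product rule $(X\odot Y)(Z\odot W)=(XZ)\odot(YW)$ --- together with its inductive consequence $(A\odot B)^n=A^n\odot B^n$ --- collapses the product into a single bilinear form:
$$p_nq_n=(\mathbf{u}^T A^n\mathbf{v})\odot(\mathbf{r}^T B^n\mathbf{s})=(\mathbf{u}\odot\mathbf{r})^T\,(A^n\odot B^n)\,(\mathbf{v}\odot\mathbf{s})=(\mathbf{u}\odot\mathbf{r})^T\,(A\odot B)^n\,(\mathbf{v}\odot\mathbf{s}).$$
Thus, setting $C=A\odot B$, the product sequence is itself of the form $p_nq_n=\widetilde{\mathbf{u}}^T C^n\widetilde{\mathbf{v}}$ with $\widetilde{\mathbf{u}}=\mathbf{u}\odot\mathbf{r}$ and $\widetilde{\mathbf{v}}=\mathbf{v}\odot\mathbf{s}$. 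Finally, letting $\chi(t)=t^{rs}-\sum_{j=0}^{rs-1}d_jt^{j}$ be the characteristic polynomial of $C$, the Cayley--Hamilton theorem gives $\chi(C)=0$, whence for every $n\ge 0$
$$p_{n+rs}q_{n+rs}-\sum_{j=0}^{rs-1}d_j\,p_{n+j}q_{n+j}=\widetilde{\mathbf{u}}^T C^{n}\,\chi(C)\,\widetilde{\mathbf{v}}=0,$$
which is precisely the statement that $(p_nq_n)_{n=0}^{+\infty}$ satisfies the linear recurrence with characteristic polynomial $\chi(t)$, i.e. the characteristic polynomial of $A\odot B$.

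I expect the only genuine obstacle to be the second step: correctly identifying a product of two scalars, each written as a bilinear form, with a single bilinear form over the tensor product, which rests entirely on the mixed-product identity for $\odot$. Once that identity is secured, the representation $p_nq_n=\widetilde{\mathbf{u}}^T C^n\widetilde{\mathbf{v}}$ makes the conclusion an immediate application of Cayley--Hamilton. It is worth remarking that $\chi(t)$ is asserted only as \emph{a} characteristic polynomial of the product sequence, not its minimal one: the sequence may in fact obey a recurrence of smaller order, but it certainly obeys the order-$rs$ recurrence furnished by $A\odot B$.
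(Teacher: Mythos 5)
Your proof is correct, but there is nothing in the paper to compare it against: the authors state this theorem without proof, quoting it as a known result and pointing to \cite{Zierler}, \cite{CV2}, and \cite{Nied}. Judged on its own, your argument holds up. The representation $p_n=\mathbf{u}^TA^n\mathbf{v}$ is valid for the companion matrix written with the recurrence coefficients in its last row (if one prefers the transposed convention, nothing changes, since transposing both factors does not alter the characteristic polynomial of the Kronecker product). The mixed-product identity legitimately yields both $(A\odot B)^n=A^n\odot B^n$ and the collapse $(\mathbf{u}^TA^n\mathbf{v})(\mathbf{r}^TB^n\mathbf{s})=(\mathbf{u}\odot\mathbf{r})^T(A\odot B)^n(\mathbf{v}\odot\mathbf{s})$, and Cayley--Hamilton then gives the order-$rs$ recurrence exactly as you write it. Your closing caveat is also the right reading of the statement: the characteristic polynomial of $A\odot B$ annihilates the product sequence but need not be its minimal polynomial. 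For comparison, the route in the cited literature is spectral rather than matrix-algebraic: in the separable case one writes $p_n=\sum_i c_i\alpha_i^n$ and $q_n=\sum_j e_j\beta_j^n$, so $p_nq_n=\sum_{i,j}c_ie_j(\alpha_i\beta_j)^n$ recurs with the monic polynomial whose roots are the products $\alpha_i\beta_j$ --- precisely the characteristic polynomial of $A\odot B$, since the eigenvalues of a Kronecker product are the pairwise products of the eigenvalues of the factors. Your bilinear-form argument buys uniformity: it requires no splitting field and handles repeated roots with no extra bookkeeping.
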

\begin{remark} \label{cps}
Let us consider a linear recurrence sequence $(p_n)_{n=0}^{+\infty}$ of order $m$, with characteristic polynomial $f(t)=t^m-\sum_{h=1}^{m}f_ht^{m-h}$ and initial conditions $p_0,...,p_{m-1}$. As a consequence of the previous theorem, the sequence $(q_n)_{n=0}^{+\infty}$, satisfying the recurrence
$$q_m=\sum_{h=1}^m{f_h q_{m-h}}+k, \quad k\in \mathbb{R},$$
 is a linear recurrence sequence with order $m+1$, characteristic polynomial $(t-1)f(t)$ and initial conditions $p_0,...,p_{m-1},p_m+k$.
\end{remark}
From the last remark, we find that sequence $(u_n)_{n=0}^{+\infty}$, introduced in Definition \ref{upol}, is a linear recurrence sequence of degree 3 with characteristic polynomial
$$t^3-(w-1)t^2+(w-1)t-1=(t-1)(t^2-(w-2)t+1)$$
and initial conditions $0,1,w$.  Moreover, we can observe that sequence $(a_n^2)_{n=0}^{+\infty}$, where $(a_n)_{n=0}^{+\infty}$ is the sequence introduced in Definition \ref{abc}, has characteristic polynomial $$t^3-(w-1)t^2+(w-1)t-1$$ and initial conditions $0,1,w$, i.e., 
\begin{equation}\label{unan}(u_n)_{n=0}^{+\infty}=(a_n^2)_{n=0}^{+\infty}.\end{equation}
\begin{remark}
Considering two positive integers $\alpha$ and $\beta$, if the point $(\sqrt{\alpha},\sqrt{\beta})\in C(w)$ is a radical point, then $\alpha$ or $\beta$ are perfect squares. Indeed, if $(\sqrt{\alpha},\sqrt{\beta})\in C(w)$ then $(\alpha,\beta)\in C_2(w)$. Thus, by Theorem 2, there exists an index $n$ such that $(\alpha,\beta)=(u_{n+1},u_n)=(a_{n+1}^2,a_n^2)$ and $(\sqrt{\alpha},\sqrt{\beta})=(a_{n+1},a_n)\in E(w)$.
\end{remark}
The sequence $(u_n(w))_{n=0}^{+\infty}$ is related  to the Chebyshev polynomials of the first kind and we will highlight this connection in the next theorem. We recall that the Chebyshev polynomials of the first kind $T_n(x)$ are defined in various ways (see, e.g., \cite{Riv} and \cite{MH}). Here we define $T_n(x)$ as the $n$-th element of a linear recurrence sequence.
\begin{definition}
The Chebyshev polynomials of the first kind are the terms of the linear recurrence sequence of polynomials $(T_n(x))_{n=0}^{+\infty}$ with characteristic polynomial
$$t^2-2xt+1$$
and initial conditions
$$T_0(x)=1,\quad T_1(x)=x.$$
\end{definition}

\begin{theorem}\label{uT}
For all real numbers $w\geq5$ we have
$$u_n(w)=a_{n}^2(w)=\cfrac{2\left(T_n(\frac{w-2}{2})-1\right)}{w-4},\quad n\geq0.$$
\end{theorem}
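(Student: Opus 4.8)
The plan is to bypass direct manipulation of the recurrence by using the already-established identity $u_n = a_n^2$ from \eqref{unan} together with a Binet-type closed form. Let $\rho$ and $\sigma$ be the roots of the characteristic polynomial $f(t) = t^2 - \sqrt{w}t + 1$, so that $\rho + \sigma = \sqrt{w}$ and $\rho\sigma = 1$; since $w \geq 5$ these are real and distinct. From the initial conditions $a_0 = 0$, $a_1 = 1$ the Binet formula gives $a_n = \frac{\rho^n - \sigma^n}{\rho - \sigma}$.

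First I would square this and use $\rho\sigma = 1$ to collapse the cross term, obtaining
$$a_n^2 = \frac{\rho^{2n} + \sigma^{2n} - 2}{(\rho - \sigma)^2}, \qquad (\rho - \sigma)^2 = (\rho + \sigma)^2 - 4\rho\sigma = w - 4.$$
Here the hypothesis $w \geq 5$ guarantees $w - 4 \neq 0$, which is exactly why $w = 4$ must be excluded: there the right-hand side of the theorem degenerates to $0/0$. The second step is to recognize the numerator as a Chebyshev value. Observe that $\rho^2 + \sigma^2 = (\rho + \sigma)^2 - 2\rho\sigma = w - 2$ and $\rho^2\sigma^2 = 1$, so $\rho^2$ and $\sigma^2$ are precisely the roots of $t^2 - (w-2)t + 1$. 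Writing $x = \frac{w-2}{2}$ turns this into the Chebyshev polynomial $t^2 - 2xt + 1$, and the standard closed form $T_n(x) = \frac{\alpha^n + \beta^n}{2}$ for the roots $\alpha, \beta$ of that polynomial yields $\rho^{2n} + \sigma^{2n} = 2\,T_n\!\left(\frac{w-2}{2}\right)$. Substituting back gives $a_n^2 = \frac{2\left(T_n(\frac{w-2}{2}) - 1\right)}{w-4}$, which is the assertion.

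I do not anticipate a genuine obstacle: the argument is an identification of two quantities built from the same pair of reciprocal roots. The only point needing a line of justification is the closed form $T_n(x) = \frac{\alpha^n + \beta^n}{2}$, which one checks against $T_0 = 1$, $T_1 = x$ and the defining recurrence. Should one prefer to avoid Binet formulas entirely, an equivalent elementary route is to set $S_n = \frac{2(T_n(\frac{w-2}{2}) - 1)}{w-4}$ and verify directly that $S_0 = 0$, $S_1 = 1$, $S_2 = w$, and that the Chebyshev recurrence $T_{n+1} = (w-2)T_n - T_{n-1}$ forces $S_{n+1} = (w-2)S_n - S_{n-1} + 2$; since $(u_n)$ satisfies the same recurrence \eqref{urec} with the same initial data, the two sequences coincide by induction.
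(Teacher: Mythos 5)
Your proof is correct, and it takes a genuinely different route from the paper's. The paper never touches the roots of any characteristic polynomial: in the spirit of Remark \ref{cps}, it observes that $(x-1)u_n(2x+2)+1$ and $T_n(x)$ both recur with the cubic $(t-1)(t^2-2xt+1)$, checks that they agree at $n=0,1,2$, concludes the two sequences are identical, and then substitutes $x=\frac{w-2}{2}$ and invokes \eqref{unan}. You instead compute with the Binet form of $a_n$: squaring $a_n=\frac{\rho^n-\sigma^n}{\rho-\sigma}$, using $\rho\sigma=1$ and $(\rho-\sigma)^2=w-4$, and recognizing $\rho^{2n}+\sigma^{2n}=2T_n\left(\frac{w-2}{2}\right)$ because $\rho^2,\sigma^2$ are the roots of $t^2-(w-2)t+1$; the closed form $T_n(x)=\frac{\alpha^n+\beta^n}{2}$ does need the one-line check against the paper's recurrence definition of $T_n$, as you note, and the distinctness of $\rho,\sigma$ needed for the Binet formula holds since $w\geq 5$. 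What your route buys: it makes the denominator $w-4$ conceptually transparent (it is the discriminant $(\rho-\sigma)^2$, which is exactly why $w=4$ must be excluded), it proves the Chebyshev identity directly for $a_n^2$ rather than through $u_n$, and it works verbatim for every real $w>4$. What the paper's route buys: it stays entirely inside the algebra of linear recurrence sequences, with no irrationalities and no distinct-roots hypothesis, consistent with the Kronecker-product machinery the authors set up earlier in the section. Your fallback argument at the end, verifying that $S_n=\frac{2\left(T_n(\frac{w-2}{2})-1\right)}{w-4}$ satisfies \eqref{urec} with $S_0=0$, $S_1=1$, $S_2=w$, is essentially the paper's argument in mirror image: the paper homogenizes the order-2 nonhomogeneous recurrence for $u_n$ into an order-3 homogeneous one and transforms $u_n$ to match $T_n$, while you keep \eqref{urec} intact and transform $T_n$ to match $u_n$.
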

\begin{proof}
The sequence $((x-1)u_n(2x+2)+1)_{n=0}^{+\infty}$ has the same characteristic polynomial of $(u_n(2x+2))_{n=0}^{+\infty}$, i.e., 
$$t^3-(2x+1)t^2+(2x+1)t-1=(t-1)(t^2-2xt+1).$$
Thus, $(T_{n}(x))_{n=0}^{+\infty}$ can be considered as a linear recurrence sequence of degree 3 with same characteristic polynomial of $((x-1)u_n(2x+2)+1)_{n=0}^{+\infty}$. Checking that 
$$T_n(x)=(x-1)u_n(2x+2)+1, \quad \text{for}\quad n=0,1,2$$
we have $$(T_n(x))_{n=0}^{+\infty}=((x-1)u_n(2x+2)+1)_{n=0}^{+\infty}.$$
Finally, posing  $x=\frac{w-2}{2}$ and using \eqref{unan}, we obtain
$$u_n(w)=a_{n}^2(w)=\cfrac{2\left(T_n(\frac{w-2}{2})-1\right)}{w-4},\quad n\geq0.$$
\end{proof}

From the previous theorem, we know that consecutive terms of  $$(u_n(w))_{n=0}^{+\infty}= \left(\cfrac{2\left(T_n(\frac{w-2}{2})-1\right)}{w-4}\right)_{n=0}^{+\infty}, \quad w\in\mathbb{N}, \quad w\geq5,$$
are solutions of the Diophantine equations \eqref{C2} and \eqref{C3}. In Proposition \ref{incr} we have studied the case $w=4$, which leads to the sequence $(u_n(4))_{n=0}^{+\infty}=(a_{n}^2(4))_{n=0}^{+\infty}=(n^2)_{n=0}^{+\infty}$.
Furthermore different values of $w$ give rise to many known integer sequences. For all the integer values of $w$ ranging from $4$ to $20$, the sequences are listed in OEIS \cite{oeis}. For example, when $w=5$, we get the sequence of alternate Lucas numbers minus 2 and, when $w=9$, we get the sequence of the squared Fibonacci numbers with even index $$\cfrac{2\left(T_n(\frac{7}{2})-1\right)}{5}=F_{2n}^2, \quad  n\geq0,$$ see sequences A004146 and A049684 in OEIS \cite{oeis}, respectively.
We point out that all these sequences satisfy the recurrence relation \eqref{urec}. The odd terms of the sequences $(u_n(w))_{n=0}^{+\infty}$ are squares and the even terms are squares multiplied by $w$. Indeed, by \eqref{unan} and Proposition \ref{P} we have that 
$$u_{2n}=a_{2n}^2=wb_n^2,\quad u_{2n+1}=a_{2n+1}^2=c_n^2.$$
These equalities clearly show interesting relations between the sequence $(u_n(w))_{n=0}^{+\infty}$ and the sequences $(b_n(w))_{n=0}^{+\infty}$,  $(c_n(w))_{n=0}^{+\infty}$, which are known sequences in OEIS for various values of $w$. We give a little list of these relationships between sequences in Table \ref{table:lines6}, leaving to the reader the pleasure to investigate what happens for other values of $w$.

\begin{table}[hp]
\small
\caption{Relationship between some integer sequences.}
\begin{tabular}{|c|c|c|c|}
\hline 
$w$ & $(u_n(w))_{n=0}^{+\infty}=(a_n^2(w))_{n=0}^{+\infty}$ & $(b_n(w))_{n=0}^{+\infty}$ & $(c_n(w))_{n=0}^{+\infty}$ \cr \hline \hline
4 & A000290=$(n^2)$ & A001477=$(n)$ & A005408=$(2n+1)$  \cr
5 & A004146=Alternate Lucas numbers - 2 & A001906=$(F_{2n})$ & A002878=$(L_{2n+1})$ \cr
6 & A092184 & A001353 & A001834 \cr
7 & A054493 (shifted by one) & A004254 & A030221 \cr
8 & A001108 & A001109 & A002315 \cr
9 & A049684=$F_{2n}^2$ & A004187 & A033890=$F_{4n+2}$ \cr
10 & A095004 (shifted by one) & A001090 & A057080 \cr
11 & A098296 & A018913 & A057081 \cr \hline
\end{tabular}
\label{table:lines6}
\normalsize
\end{table}

In the next section we will highlight further properties connecting previous sequences with Chebyshev polynomials of the second kind and Morgan-Voyce polynomials.

\section{Connections with Chebyshev polynomials of the second kind, Morgan-Voyce polynomials, and new identities} \label{sec:pol}
In the previous section, we proved that the sequence $(u_n(w))_{n=0}^{+\infty}$ can be expressed in terms of the Chebyshev polynomials of the first kind. We showed that these polynomials can be used in order to solve the Diophantine equation \eqref{C2} and determine positive integer points over the conics $C(w)$, $C_2(w)$ and $C_3(w)$.
Now, we find new interesting relations among the previous sequences and some classes of well-known polynomials: the Chebyshev polynomials of the second kind and the Morgan-Voyce polynomials. These polynomials are defined in various ways. We recall their definitions by using linear recurrence sequences.
\begin{definition}
The Chebyshev polynomials of the second kind are the terms of the linear recurrence sequence of polynomials $(U_n(x))_{n=0}^{+\infty}$ with characteristic polynomial
$$t^2-2xt+1$$
and initial conditions
$$U_0(x)=1,\quad U_1(x)=2x.$$
\end{definition}
\begin{definition}\label{chebs}
The Chebyshev polynomials $S_n(x)=U_n\left(\frac{x}{2}\right)$, for all $n\geq0$, are the terms of the linear recurrence sequence of polynomials $(S_n(x))_{n=0}^{+\infty}$ with characteristic polynomial
$$t^2-xt+1$$
and initial conditions
$$S_0(x)=1,\quad S_1(x)=x.$$
\end{definition}
\begin{definition}\label{MV}
The Morgan-Voyce polynomials are the terms of the linear recurrence sequences of polynomials $(f_n(x))_{n=0}^{+\infty}$ and $(g_n(x))_{n=0}^{+\infty}$ with characteristic polynomial
$$t^2-(x+2)t+1$$
and initial conditions
$$f_0(x)=g_0(x)=1,\quad f_1(x)=1+x, \quad g_1(x)=2+x.$$
For properties of these polynomials see, e.g., \cite{Swamy} and \cite{Riley}.\end{definition}
We summarize our results in the next propositions.
\begin{proposition}\label{abcs}
Considering the sequences $(a_n(w))_{n=0}^{+\infty}$, $(b_n(w))_{n=0}^{+\infty}$, $(c_n(w))_{n=0}^{+\infty}$ introduced in Definition \ref{abc} and the Chebyshev polynomials of the second kind introduced in Definition \ref{chebs}, we have
\begin{align*}
1)\quad  a_n(w)&=S_{n-1}(\sqrt{w}),\\
2)\quad  b_n(w)&=S_{n-1}(w-2),\\
3)\quad  c_{n}(w)&=S_{n}(w-2)+S_{n-1}(w-2),
\end{align*}
for $n\geq1$.
\end{proposition}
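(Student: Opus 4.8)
The plan is to exploit the fact that a second-order linear recurrence sequence is uniquely determined by its characteristic polynomial (equivalently, its two-term recurrence) together with its first two terms. All four sequences involved are of this type, so each identity will reduce to checking that two sequences share the same recurrence and agree on two consecutive initial values. Part 3 will then fall out as a purely algebraic corollary of part 2.

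First I would handle identity 1. By Definition \ref{chebs}, the sequence $(S_n(\sqrt{w}))_{n\geq0}$ satisfies $S_{n+1}(\sqrt{w})=\sqrt{w}\,S_n(\sqrt{w})-S_{n-1}(\sqrt{w})$, whose characteristic polynomial $t^2-\sqrt{w}\,t+1$ is exactly $f(t)$ from Definition \ref{abc}. I would compare it with the shifted sequence $(a_{n+1})_{n\geq0}$, which obeys the same recurrence $a_{n+1}=\sqrt{w}\,a_n-a_{n-1}$. Since $S_0(\sqrt{w})=1=a_1$ and $S_1(\sqrt{w})=\sqrt{w}=a_2$, the two sequences coincide term by term, giving $a_{n+1}=S_n(\sqrt{w})$, i.e. $a_n=S_{n-1}(\sqrt{w})$ for $n\geq1$.

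Identity 2 follows the identical template with $g(t)$ in place of $f(t)$. Here $(S_n(w-2))_{n\geq0}$ has characteristic polynomial $t^2-(w-2)t+1=g(t)$, which is the recurrence governing $(b_n)_{n=0}^{+\infty}$. Matching the initial data, $S_0(w-2)=1=b_1$ and $S_1(w-2)=w-2=b_2$ (the value $b_2=w-2$ being recorded in Corollary \ref{corbc}), so $b_{n+1}=S_n(w-2)$, that is $b_n=S_{n-1}(w-2)$ for $n\geq1$. For identity 3 no new recurrence argument is needed: Corollary \ref{corbc} gives $b_{n+1}=c_n-b_n$, hence $c_n=b_n+b_{n+1}$, and substituting identity 2 yields $c_n=S_{n-1}(w-2)+S_n(w-2)$ directly.

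There is no genuine obstacle in this proposition; the content is the recognition that each sequence is a second-order linear recurrence and the bookkeeping of the index shift. The only point demanding a little care is precisely that shift by one in the Chebyshev subscript (so that $a_n$ and $b_n$ are matched to $S_{n-1}$, not $S_n$), together with the observation that $a_0=b_0=0$ are naturally excluded since they would correspond to a nonexistent $S_{-1}$ in the chosen indexing; this is why each identity is stated only for $n\geq1$.
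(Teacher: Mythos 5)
Your proof is correct and follows essentially the same route as the paper's: matching the characteristic polynomial of each sequence with that of the appropriate Chebyshev sequence $(S_n)$, verifying the shifted initial conditions to get the index-shift identities 1) and 2), and deducing 3) from the relation $c_n = b_n + b_{n+1}$ of Corollary \ref{corbc}. No discrepancies to report.
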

\begin{proof}
\ \\
\begin{enumerate}
\item Since the sequences $(S_n(\sqrt{w}))_{n=0}^{+\infty}$ and $(a_n(w))_{n=0}^{+\infty}$ have the same characteristic polynomial $t^2-\sqrt{w}t+1$, $a_0(w)=0$, $a_1(w)=1=S_0(\sqrt{w})$, and $a_2(w)=\sqrt{w}=S_1(\sqrt{w})$, for all $n\geq 1$ the sequence $(a_n(w))_{n=0}^{+\infty}$ corresponds to the left shift of the sequence $(S_n(\sqrt{w}))_{n=0}^{+\infty}$.

\item A similar argument shows that for $n\geq1$ the sequence $(b_n(w))_{n=0}^{+\infty}$ is the left shift of the sequence $(S_n(w-2))_{n=0}^{+\infty}$ because they have the same characteristic polynomial $t^2-(w-2)t+1$ and shifted initial conditions.

\item Considering the sequence $(c_n(w))_{n=0}^{+\infty}$ we have from Corollary \ref{corbc} the relation $c_n(w)=b_n(w)+b_{n+1}(w)=S_{n}(w-2)+S_{n-1}(w-2)$.
\end{enumerate}
\end{proof}
\begin{proposition}\label{MVbc}
Considering the sequences $(a_n(w))_{n=0}^{+\infty}$, $(b_n(w))_{n=0}^{+\infty}$, $(c_n(w))_{n=0}^{+\infty}$ introduced in Definition \ref{abc} and the Morgan-Voyce polynomials of the second kind introduced in Definition \ref{MV}, we have
\begin{align*}
1)\quad &  b_n(w)=-g_{n-1}(-w),\quad c_n(w)=f_n(-w), \quad \text{n even},\\
2)\quad & b_n(w)=g_{n-1}(-w),\quad c_n(w)=-f_n(-w),\quad \text{n odd},\\
\end{align*}
for $n\geq 1$.
\end{proposition}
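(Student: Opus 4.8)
The plan is to establish Proposition \ref{MVbc} by exploiting the already-proven relationship between $(b_n)$, $(c_n)$ and the Chebyshev polynomials $S_n$ in Proposition \ref{abcs}, together with a clean comparison between the characteristic polynomial of $S_n(w-2)$ and the Morgan-Voyce polynomials evaluated at $-w$. The key observation is that the Morgan-Voyce polynomials $f_n(x)$ and $g_n(x)$ both satisfy the recurrence with characteristic polynomial $t^2-(x+2)t+1$. Substituting $x=-w$ gives characteristic polynomial $t^2-(2-w)t+1 = t^2+(w-2)t+1$, whereas $S_n(w-2)$ has characteristic polynomial $t^2-(w-2)t+1$. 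These differ by the sign of the middle coefficient, which is precisely the kind of sign discrepancy that an alternating factor $(-1)^n$ repairs. Concretely, if $(p_n)$ satisfies $p_{n+1}=(w-2)p_n-p_{n-1}$, then $((-1)^n p_n)$ satisfies $q_{n+1}=-(w-2)q_n-q_{n-1}$, which has exactly the characteristic polynomial of the Morgan-Voyce sequences at $x=-w$. This explains the parity-dependent signs appearing in the statement.

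First I would fix the substitution $x=-w$ in Definition \ref{MV} and record the resulting recurrences, namely $g_{n+1}(-w)=(2-w)g_n(-w)-g_{n-1}(-w)$ and the analogous relation for $f_n(-w)$, with initial data $g_0(-w)=1$, $g_1(-w)=2-w$, $f_0(-w)=1$, $f_1(-w)=1-w$. Next I would set up the two candidate sequences $((-1)^{n}g_{n-1}(-w))$ and $((-1)^{n+1}f_n(-w))$ and verify, via the sign-flip argument above, that each satisfies the recurrence $r_{n+1}=(w-2)r_n-r_{n-1}$, i.e.\ the recurrence of $(b_n)$ and $(c_n)$ governed by $g(t)=t^2-(w-2)t+1$. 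Since two sequences obeying the same second-order linear recurrence agree for all $n$ as soon as they agree on two consecutive indices, it then suffices to check the base cases. For the $b$-relation I would compare $b_1=1$ and $b_2=w-2$ against $-g_0(-w)=-1$ and $g_1(-w)=2-w$, confirming the pattern $b_n=-g_{n-1}(-w)$ for $n$ even and $b_n=g_{n-1}(-w)$ for $n$ odd. For the $c$-relation I would similarly compare $c_1=w-1$ and $c_2=w^2-3w+1$ against $-f_1(-w)=w-1$ and $f_2(-w)$.

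To carry this out cleanly, I would prefer to reformulate both claims as the single unified statements $b_n(w)=(-1)^{n}g_{n-1}(-w)$ and $c_n(w)=(-1)^{n+1}f_n(-w)$ for all $n\geq 1$, which encode both parity cases at once and make the induction transparent; the parity-split form in the proposition then follows immediately by reading off the sign of $(-1)^n$. The most economical route is probably to route the verification through Proposition \ref{abcs}, using the known identity $g_n(x)=S_n(x+2)$ (a standard Morgan-Voyce/Chebyshev relation, which one checks on characteristic polynomials and initial conditions) so that $g_{n-1}(-w)=S_{n-1}(2-w)$, and then relating $S_{n-1}(2-w)$ to $S_{n-1}(w-2)=b_n(w)$ by the Chebyshev parity identity $S_m(-z)=(-1)^m S_m(z)$. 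The $c$-relation would follow analogously from part 3 of Proposition \ref{abcs} together with the corresponding Morgan-Voyce identity $f_n(x)=S_n(x+2)-S_{n-1}(x+2)$ or a direct $f_n(x)=\tfrac{1}{2}\bigl(g_n(x)-g_{n-1}(x)\bigr)$-type decomposition.

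I expect the main obstacle to be bookkeeping the sign conventions rather than any conceptual difficulty: one must be careful that the index shift in Proposition \ref{abcs} (where $b_n=S_{n-1}$) lines up correctly with the index in $g_{n-1}$, and that the parity factor $(-1)^m$ from the Chebyshev reflection $S_m(-z)=(-1)^m S_m(z)$ produces exactly the stated $n$-even/$n$-odd dichotomy after the shift. A secondary subtlety is that the Morgan-Voyce-to-Chebyshev identities $g_n(x)=S_n(x+2)$ and the $f$-analogue are being used implicitly; if the paper does not wish to assume them, the safest fallback is the purely recursive argument of the second paragraph, which avoids citing any external polynomial identity and relies only on matching the recurrence $t^2-(w-2)t+1$ and two initial values — this makes the proof self-contained at the cost of slightly more routine arithmetic in the base cases.
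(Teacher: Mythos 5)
Your plan contains a concrete sign error that, as written, would have you proving statements that contradict the proposition. You propose to unify the two parity cases as $b_n(w)=(-1)^{n}g_{n-1}(-w)$ and $c_n(w)=(-1)^{n+1}f_n(-w)$; both are wrong. Check $n=1$: $b_1(w)=1$ and $g_0(-w)=1$, so $b_1(w)=+g_0(-w)$, whereas your formula gives $(-1)^1g_0(-w)=-1$. The correct unified forms are $b_n(w)=(-1)^{n+1}g_{n-1}(-w)$ and $c_n(w)=(-1)^{n}f_n(-w)$, which do reproduce the proposition's parity split. The same slip appears in your base-case paragraph, where you pair $b_1=1$ with $-g_0(-w)=-1$ and $b_2=w-2$ with $g_1(-w)=2-w$; the correct pairings are $b_1=g_0(-w)$ and $b_2=-g_1(-w)$. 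Since the entire content of this proposition is the parity-dependent sign bookkeeping, this is not a cosmetic issue: an induction launched from your stated unified identities fails at $n=1$. (Your sign-flip lemma itself — that $(-1)^np_n$ satisfies the recurrence with characteristic polynomial $t^2+(w-2)t+1$ when $p_n$ satisfies the one with $t^2-(w-2)t+1$ — is correct; the error is only in which power of $-1$ matches the initial data.)

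Once the signs are repaired, both of your routes are valid, and it is worth noting how they relate to the paper. Your ``fallback'' purely recursive argument (same second-order recurrence plus two consecutive initial values, organized by parity) is essentially the paper's own proof: the paper verifies $b_1,c_1,b_2,c_2$ and runs a strong induction using the Morgan-Voyce recurrence at $x=-w$, splitting into $n$ even and $n$ odd exactly as your sign-flip observation predicts. Your preferred route through Proposition \ref{abcs} is genuinely different: using $g_{n-1}(x)=S_{n-1}(x+2)$, $f_n(x)=S_n(x+2)-S_{n-1}(x+2)$, and the reflection identity $S_m(-z)=(-1)^mS_m(z)$, one gets $g_{n-1}(-w)=(-1)^{n-1}S_{n-1}(w-2)=(-1)^{n-1}b_n(w)$ and $f_n(-w)=(-1)^n\left(S_n(w-2)+S_{n-1}(w-2)\right)=(-1)^nc_n(w)$, which is cleaner and replaces induction by citation of standard identities; the cost is that the two Morgan-Voyce--Chebyshev identities and the reflection formula must themselves be justified (each by a short recurrence-plus-initial-conditions check), so the total work is comparable, and the paper's self-contained induction avoids importing anything beyond its own Corollary \ref{corbc}.
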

\begin{proof}
We prove the proposition by induction. We have  
$$b_2(w)=w-2=-(2-w)=-g_1(-w) \quad c_2(w)=w^2-3w+1=f_2(-w)$$ and 
$$b_1(w)=1=g_0(-w)\quad c_1(w)=w-1=-(1-w)=-f_1(-w).$$
Thus, the inductive basis is true.
Now let us suppose that the relations hold for all indexes $k\leq n$.
If $n$ is  even, we have that 
$$b_{n+1}(w)=(w-2)b_n(w)-b_{n-1}(w), \quad c_{n+1}(w)=(w-2)c_n(w)-c_{n-1}(w).$$
Observing that $n-1$ is odd, using the inductive hypothesis we obtain
\small
$$b_{n+1}(w)=(w-2)(-g_{n-1}(-w))-g_{n-2}(-w)=(2-w)g_{n-1}(-w)-g_{n-2}(-w)=g_{n}(-w)$$
\normalsize
and
\small
$$c_{n+1}(w)=(w-2)f_n(-w)+f_{n-1}(-w)=-[(2-w)f_n(-w)-f_{n-1}(-w)]=-f_{n+1}(-w)$$
\normalsize
by means of the recurrence relations for the  Morgan-Voyce polynomials.
Since analogous considerations are valid when $n$ is odd, the proof is complete.
\end{proof}
The previous results give a straightforward way to obtain a new proof of some known relations involving Chebyshev and Morgan-Voyce polynomials (see, e.g., \cite{Witula}).
\begin{proposition}
Given any $n\geq1$ and $x\geq2$, we have
\begin{align*}
1)\quad &  S_{2n}(x)=(-1)^nf_{n-1}(-x^2),\\
2)\quad & S_{2n-1}(x)=(-1)^{n-1}xg_{n-1}(-x^2),\\
3)\quad & S_{n}^2(x)-xS_{n}(x)S_{n-1}(x)+S^2_{n-1}(x)=1.
\end{align*}
\end{proposition}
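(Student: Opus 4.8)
The plan is to reduce all three identities to facts already established for the sequences $a_n, b_n, c_n$, and then to pass from the special arguments $x=\sqrt w$ to arbitrary real $x$ by a \emph{polynomial identity} argument. The bridge is Proposition \ref{abcs}(1), which reads $a_n(w)=S_{n-1}(\sqrt w)$; combining it with the even/odd splitting of Proposition \ref{P}, namely $a_{2n}=b_n\sqrt w$ and $a_{2n+1}=c_n$, I obtain the two dictionary relations
$$S_{2n}(\sqrt w)=c_n(w),\qquad S_{2n-1}(\sqrt w)=b_n(w)\,\sqrt w,$$
valid for every integer $w\ge 4$ and every $n\ge 1$. These are the only facts about $S$ needed for identities 1) and 2).

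For identity 1), I would substitute into $S_{2n}(\sqrt w)=c_n(w)$ the Morgan--Voyce expression for $c_n$ coming from Proposition \ref{MVbc}, collapsing its two parity cases into the single signed formula $c_n(w)=(-1)^{n}f_{n}(-w)$; writing $w=x^2$ then gives the claim at $x=\sqrt w$. For identity 2), the same substitution in $S_{2n-1}(\sqrt w)=b_n(w)\sqrt w$, using the unified form $b_n(w)=(-1)^{n-1}g_{n-1}(-w)$, yields $S_{2n-1}(\sqrt w)=(-1)^{n-1}\sqrt w\,g_{n-1}(-w)$, which is exactly the claim at $x=\sqrt w$. In both cases the equality now holds for the infinitely many values $x=\sqrt w$, $w\in\{4,5,6,\dots\}$; since each side is a polynomial in $x$---even for identity 1), odd with an explicit factor $x$ for identity 2)---agreement on an infinite set forces agreement as polynomials, hence for all real $x\ge 2$.

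For identity 3) I would stay in the spirit of the paper and specialize the conic membership of Proposition \ref{P}(1). That computation gives $a_n^2-\sqrt w\,a_n a_{n-1}+a_{n-1}^2=1$; replacing $a_n=S_{n-1}(\sqrt w)$ and $a_{n-1}=S_{n-2}(\sqrt w)$ and relabelling yields
$$S_{n}^2(x)-xS_{n}(x)S_{n-1}(x)+S_{n-1}^2(x)=1$$
for $x=\sqrt w$, and the polynomial-identity argument above then extends it to all $x$. Alternatively this identity admits a self-contained induction: setting $D_n:=S_n^2-xS_nS_{n-1}+S_{n-1}^2$, the recurrence $S_{n+1}=xS_n-S_{n-1}$ gives $D_{n+1}=D_n$ after expanding, while $D_1=x^2-x^2+1=1$ supplies the base case.

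I expect the only genuine obstacle to be bookkeeping rather than ideas: correctly aligning the index shift between the $a$-indexing and the $S$-indexing ($a_n=S_{n-1}$), and tracking the parity-dependent signs when collapsing the two cases of Proposition \ref{MVbc} into a single $(-1)^n$, respectively $(-1)^{n-1}$, factor. It is worth checking the base case $n=1$ of each identity explicitly, since that is precisely where an off-by-one in the Morgan--Voyce index would surface; the only other point needing a line of justification is the verification that both sides are honest polynomials in $x$, which legitimizes the extension from the discrete set $x=\sqrt w$ to all real $x\ge 2$.
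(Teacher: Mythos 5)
Your overall route is the paper's own: chain Proposition \ref{abcs}(1) with Proposition \ref{P}(2,3) to get the dictionary $S_{2n}(\sqrt w)=c_n(w)$, $S_{2n-1}(\sqrt w)=\sqrt w\,b_n(w)$, feed in Proposition \ref{MVbc}, and for 3) specialize the conic relation of Proposition \ref{P}(1). Your two refinements are sound and arguably improve on the paper: the polynomial-identity extension legitimately bridges the fact that Proposition \ref{P} is only stated for $w\in\mathbb N$ (the paper instead substitutes $x=\sqrt w$ for arbitrary real $x\ge2$, tacitly using that its propositions hold for real $w\ge4$), and your direct induction $D_{n+1}=D_n$, $D_1=1$ gives 3) without any of this machinery.

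The one genuine problem is in identity 1), and it sits exactly where you predicted an error would surface. Collapsing Proposition \ref{MVbc} correctly, as you do, gives $c_n(w)=(-1)^nf_n(-w)$, so your chain proves
$$S_{2n}(x)=(-1)^nf_n(-x^2),$$
which is \emph{not} the stated $S_{2n}(x)=(-1)^nf_{n-1}(-x^2)$; yet you assert that this ``gives the claim,'' and you never carry out the $n=1$ check you yourself prescribed. Had you done so, you would have found $S_2(x)=x^2-1$ while $(-1)^1f_0(-x^2)=-1$: the statement as printed is false (off by one in the Morgan--Voyce index), and what your argument actually proves is the corrected version. For what it is worth, the paper's own proof contains the matching slip---it writes $c_n(w)=(-1)^nf_{n-1}(-w)$, contradicting its own Proposition \ref{MVbc} and the computation $c_1(w)=w-1=-f_1(-w)$---and also a sign typo in its chain for 2), writing $(-1)^n$ where $(-1)^{n-1}$ is needed; your version of 2) is the correct one. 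So your mathematics for 1), 2), 3) is right, but for 1) the proof and the claim you say it establishes do not match: the resolution is to correct the statement to $f_n$, not to adjust your argument.
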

\begin{proof}
By Proposition \ref{P}, Proposition \ref{abcs} and Proposition \ref{MVbc} we have 
$$S_{2n}(\sqrt{w})=a_{2n+1}(w)=c_n(w)=(-1)^nf_{n-1}(-w),$$
$$S_{2n-1}(\sqrt{w})=a_{2n}(w)=\sqrt{w}b_n(w)=(-1)^n\sqrt{w}g_{n-1}(-w).$$
Thus, with the substitution $x=\sqrt{w}$ the proof of 1) and 2) is straightforward.
Finally, from Proposition \ref{P}  we obtain
$$a_{n+1}^2(w)-\sqrt{w}a_{n+1}(w)a_{n}(w)+a_{n}^2(w)=1$$
and by Proposition \ref{abcs} we have 
$$S_{n}^2(\sqrt{w})-\sqrt{w}S_{n}(\sqrt{w})S_{n-1}(\sqrt{w})+S_{n-1}^2(\sqrt{w})=1.$$
If we pose $x=\sqrt{w}$ the proof of 3) easily follows.
\end{proof}


\begin{thebibliography}{99}

\bibitem{Iran} M. Bahramian and H. Daghigh, A generalized Fibonacci sequence and the Diophantine equations $x^2\pm kxy - y^2 \pm x=0$, \emph{Iran. J. Math. Sci. Inform.} \textbf{8}\textbf{(2)} (2013), 111-121.

\bibitem{bcm} S. Barbero, U. Cerruti, and N. Murru, Solving the Pell equation via R\'edei rational functions, \emph{Fibonacci Quart.} \textbf{48} (2010), 348-357.

\bibitem{bcm2} S. Barbero, U. Cerruti, and N. Murru, Generalized R\'edei rational functions and rational approximations over conics, \emph{Int. J. Pure Appl. Math.} \textbf{64(2)} (2010), 305-316.

\bibitem{Burger} E. B. Burger and A. M. Pillai, On Diophantine approximation along algebraic curves, \emph{Proc. Amer. Math. Soc.} \textbf{136(1)} (2008), 11-19.

\bibitem{CV} U. Cerruti and F. Vaccarino, Matrices, recurrent sequences and arithmetic, \emph{Applications of Fibonacci Numbers} \textbf{6} (1994), 53-62.

\bibitem{CV2} U. Cerruti and F. Vaccarino, \emph{R-algebras of linear recurrent sequences}, \emph{J. Algebra} \textbf{175(1)} (1995), 332-338.

\bibitem{Nied} R. G\"{o}ttfert and H. Niederreiter, On the minimal polynomial of the product of linear recurring sequences, \emph{Finite Fields Appl.} \textbf{1} (1995), 204-218.

\bibitem{Hon} J. Hong, K. Jeong, and J. H. Kwon, Integral points on hyperbolas, \emph{J. Korean Math. Soc.} \textbf{34(1)} (1997), 149-157.

\bibitem{Jones} J. P. Jones, Diophantine representation of the Fibonacci numbers, \emph{Fibonacci Quart.} \textbf{13} (1975), 84-88.

\bibitem{Jones2} J. P. Jones, Representation of solutions of Pell equations using Lucas sequences, \emph{Acta Acad. Paed. Agriensis New Series Section Math.} \textbf{30} (2003), 75-86.

\bibitem{Horadam} A. F. Horadam, Basic properties of a certain generalized sequence of numbers, \emph{Fibonacci Quart.} \textbf{3(2)} (1965), 161-176.

\bibitem{Kilic} E. Kilic and N. Omur, Conics characterizing the generalized Fibonacci and Lucas sequences with indices in arithmetic progressions, \emph{Ars Combin.} \textbf{94} (2010), 459-464.

\bibitem{Kimb} C. Kimberling, Fibonacci hyperbolas, \emph{Fibonacci Quart.} \textbf{28(1)} (1990), 22-28.

\bibitem{Mat} Y. Matiyasevich, Hilbert's tenth theorem, The MIT Press, Cambridge, London, 1993. 

\bibitem{Daniel} W. L. McDaniel, Diophantine representation of Lucas sequences, \emph{Fibonacci Quart.} \textbf{33(1)} (1995), 59-63.

\bibitem{Mela} R. Melham, Conic which characterize certain Lucas sequences, \emph{Fibonaci Quart.} Vol. \textbf{35(3)} (1997), 248-251. 

\bibitem{Mills} W. H. Mills, A system of quadratic Diophantine equations, \emph{Pacific J. Math.} \textbf{3(1)} (1953), 209-220.

\bibitem{Riley} R. Riley, Algebra for Heckoid groups, \emph{Trans. Amer. Math. Soc.} \textbf{334(1)} 1992, 389-409.

\bibitem{Riv} T. J. Rivlin,  The Chebyshev Polynomials,  John Wiley and Sons, 1974.

\bibitem{MH} J. C. Mason and D. C. Handscomb, Chebyshev Polynomials, Chapman and Hall, 2002.

\bibitem{oeis} The On-Line Encyclopedia of Integer Sequences, published electronically at http://oeis.org, 2014.

\bibitem{Swamy} M. N. S. Swamy, Properties of the polynomials defined by Morgan-Voyce, \emph{Fibonacci Quart.} \textbf{4} 1966, 73-81.

\bibitem{Zierler} N. Zierler and W. H. Mills, Products of linear recurring sequences, \emph{J. Algebra} \textbf{27} (1973), 147-157.

\bibitem{Witula} R. Witula and D. Slota, On modified Chebyshev polynomials, \emph{J. Math. Anal. Appl.} \textbf{324} (2006), 321-343.
\end{thebibliography}
\end{document}